\newcommand{\showcomments}{yes}
\newsavebox{\commentbox}
\newcounter{ax}
\newtheorem{thm}{Theorem}[section]
\newtheorem{lem}[thm]{Lemma}
\newtheorem{cor}[thm]{Corollary}
\theoremstyle{definition}
\newtheorem{rem}[thm]{Remark}
\newtheorem{claim*}{Claim}
\DeclareMathOperator{\rank}{rk}
\DeclareMathOperator{\Aut}{Aut}
\DeclareMathOperator{\Out}{Out}
\newcommand{\neb}{\mathcal N}
\newcommand{\field}[1]{\mathbb{#1}}
\newcommand{\integers}{\ensuremath{\field{Z}}}
\newcommand{\naturals}{\ensuremath{\field{N}}}
\newcommand{\reals}{\ensuremath{\field{R}}}
\newcommand{\Rmnum}[1]{\mathbf{{\expandafter\@slowromancap\romannumeral #1@}}}
\let\oldmarginpar\marginpar
\renewcommand\marginpar[1]{\-\oldmarginpar[\raggedleft\footnotesize #1]%
{\raggedright\footnotesize #1}}
\newcounter{enumitemp}
\newcommand{\dist}{\textup{\textsf{d}}}
\newcommand{\growth}{\mathrm{GR}}
\begin{document}
\title{A remark on thickness of free-by-cyclic groups}
\date{\today}
\author[M. Hagen]{Mark Hagen}
\address{School of Mathematics, University of Bristol, Bristol, United Kingdom}
\email{markfhagen@posteo.net}

\keywords{thick space, wide space, relatively hyperbolic group, free group automorphism}
\maketitle

\begin{abstract}
Let $F$ be a free group of positive, finite rank and let $\Phi\in\Aut(F)$ be a polynomial-growth automorphism.  Then 
$F\rtimes_\Phi\integers$ is \emph{strongly thick} of order $\eta$, where $\eta$ is the rate of polynomial growth of $\phi$. 
 This fact is implicit in work of Macura~\cite{Macura}, but~\cite{Macura} predates the notion of thickness.  Therefore, in 
this note, we make the relationship between polynomial growth of and thickness explicit.  Our result 
combines with a result independently due to Dahmani-Li, Gautero-Lustig, and Ghosh to show that free-by-cyclic groups 
admit relatively hyperbolic structures with thick peripheral subgroups.
\end{abstract}

\section{Definitions, statement, discussion}
There has been significant interest in the geometry of mapping tori of polynomial-growth automorphisms of finite-rank free 
groups (see e.g.~\cite{Button,ButtonKropholler,Macura,BFH:Kolchin}).  There is also a considerable literature 
on hyperbolicity, relative hyperbolicity, and acylindrical hyperbolicity of mapping tori of general automorphisms of free 
groups.  For 
example, 
$F\rtimes_\Phi\integers$ is word-hyperbolic exactly when $\Phi\in\Aut(F)$ 
is atoroidal~\cite{Brinkmann,BestvinaFeighn:comb}, and recent work of Dahmani-Li~\cite{DahmaniLi} and 
Ghosh~\cite{Ghosh} characterises nontrivial relative hyperbolicity of $F\rtimes_\Phi\integers$: it is equivalent 
to exponential growth of $\Phi$.  Even in the polynomial-growth 
case, where nontrivial relative hyperbolicity is impossible (by combining~\cite[Theorem 7.2]{Macura} and~\cite[Theorem 
1.3]{Sisto}), recent results show that virtual acylindrical hyperbolicity holds provided $\Phi$ has infinite 
order~\cite{Ghosh,ButtonKropholler}.  In this note, we show that when $\Phi$ has polynomial growth, $F\rtimes_\Phi\integers$ 
is non-relatively hyperbolic in a strong way: 
$F\rtimes_\Phi\integers$ is \emph{thick} in the sense of~\cite{BDM:thick}.

There is a general question of which classes $\mathcal C$ of groups have the property that each 
$G\in\mathcal C$ is either relatively hyperbolic or thick, and, more strongly, which $\mathcal C$ have the property that 
each $G\in\mathcal C$ exhibits a (possibly trivial) relatively hyperbolic structure in which the peripheral subgroups are 
thick.  This property is interesting because such a relatively hyperbolic structure is quasi-isometry invariant and 
``minimal'': each peripheral subgroup is peripheral in any 
relatively hyperbolic structure on $G$, by~\cite[Corollary 4.7]{BDM:thick} or~\cite[Theorem 1.7]{DrutuSapir}.  

Classes of groups that 
have (possibly trivial) relatively hyperbolic structures with thick peripherals 
include Coxeter groups~\cite{BHS:cox}, fundamental groups of ``mixed'' 
$3$--manifolds (consider the geometric decomposition and 
apply~\cite[Theorem 1.2]{BDM:thick} to the graph manifold pieces), and Artin 
groups (combine~\cite[Lemma 10.3]{BDM:thick} with~\cite[Theorem 1.2]{CharneyParis}).

Our main result combines with a theorem established independently by Dahmani-Li, 
Gautero-Lustig, and Ghosh to yield:

\begin{cor}[Relatively hyperbolicity with thick 
peripherals]\label{cor:minimal_relhyp}
Let $F$ be a free group of finite positive rank, let $\Phi\in\Aut(F)$, and let $G=F\rtimes_\Phi\integers$.  Then either $G$ 
is thick, or $G$ is hyperbolic relative to a finite collection of proper subgroups, each of which is thick.
\end{cor}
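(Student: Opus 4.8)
The plan is to split the argument according to the growth type of $\Phi$, using the basic fact --- a consequence of the (relative) train-track machinery of Bestvina--Handel and Bestvina--Feighn--Handel, see~\cite{BFH:Kolchin} --- that every automorphism of a finitely generated free group has either polynomial growth or exponential growth. If $\Phi$ has polynomial growth, then our main theorem gives at once that $G=F\rtimes_\Phi\integers$ is strongly thick, in particular thick, which is the first alternative. (In particular the rank-one case, where $\Aut(F)$ is finite and $\Phi$ has bounded growth, falls here.) So all the content lies in the exponential-growth case.

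Assume then that $\Phi$ has exponential growth. Here I would invoke the structural theorem of Gautero--Lustig, Dahmani--Li and Ghosh (see~\cite{DahmaniLi,Ghosh}): $G$ is hyperbolic relative to a finite collection $\mathcal P=\{P_1,\dots,P_k\}$ of subgroups, where each $P_i$ is conjugate to the mapping torus $H_i\rtimes_{\Psi_i}\integers$ of a canonical polynomially-growing subgroup $H_i\le F$; here $H_i$ is finitely generated --- hence free of finite rank --- and $\Psi_i\in\Aut(H_i)$ is obtained by restricting a power of $\Phi$ to $H_i$, possibly after correcting by an inner automorphism, and has polynomial growth. It then remains to check that each $P_i$ is proper in $G$ and that each $P_i$ is thick.

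Properness is where I would use the exponential-growth hypothesis. Since the growth type of an automorphism of a finitely generated free group is unchanged on passing to powers and on composing with inner automorphisms, if some $H_i$ equalled $F$ then $\Psi_i$ would be a polynomial-growth automorphism of $F$ equal to a power of $\Phi$ (no inner correction being needed, as $\Phi$ preserves $F$), forcing $\Phi$ itself to have polynomial growth --- a contradiction. Hence $H_i\lneq F$ for every $i$, so $P_i\cap F$ is a conjugate of the proper subgroup $H_i$ and therefore $P_i\lneq G$. For thickness: when $\rank(H_i)\ge 1$, $\Psi_i$ is a polynomial-growth automorphism of a free group of positive finite rank, so $P_i\cong H_i\rtimes_{\Psi_i}\integers$ is strongly thick --- in particular thick --- by our main theorem. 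The only remaining possibility is $\rank(H_i)=0$, i.e.\ $H_i$ trivial and $P_i\cong\integers$; but finite and two-ended peripheral subgroups may always be removed from a relatively hyperbolic structure, so I would simply delete all such $P_i$ from $\mathcal P$ at the outset (equivalently: once the virtually cyclic peripherals are removed, every surviving $H_i$ is infinite, hence of positive rank). This yields the second alternative, the empty collection being allowed and corresponding to $G$ hyperbolic.

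I do not expect a genuine obstacle: the Corollary is a formal consequence of our main theorem, the cited structural result, and a couple of standard facts about relative hyperbolicity. The step needing the most care is simply quoting the external theorem in a form that exhibits each peripheral subgroup as (a conjugate of) the mapping torus of a \emph{finite-rank} free subgroup on which a power of $\Phi$, adjusted by an inner automorphism, restricts to a \emph{polynomial-growth} automorphism --- precisely the input our main theorem consumes. As a consistency check one may note that the two alternatives are mutually exclusive, since a thick group is not hyperbolic relative to any family of proper subgroups~\cite{BDM:thick}.
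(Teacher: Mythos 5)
Your proposal is correct and follows essentially the same route as the paper: split on the polynomial/exponential growth dichotomy, apply the main theorem directly in the polynomial case, and in the exponential case quote the Dahmani--Li/Ghosh/Gautero--Lustig structure theorem and feed each peripheral mapping torus back into the main theorem. The extra care you take over properness of the peripherals and over discarding trivial or cyclic ones is sensible bookkeeping that the paper leaves implicit, but it does not change the argument.
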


\begin{proof}
If $\Phi$ is polynomially growing or of finite order, then $G$ is thick by Theorem~\ref{thm:main}.  
Otherwise, $\Phi$ is exponentially growing.  Theorem~3.9 of~\cite{DahmaniLi} implies that $G$ is hyperbolic relative to a 
finite collection $\mathcal P'$ of peripheral subgroups, each of which is the mapping torus of a polynomial-growth free 
group automorphism and therefore thick by Theorem~\ref{thm:main}.  (One can also use Corollary~3.12 of~\cite{Ghosh} in 
conjunction with~\cite[Theorem 4.8, Remark 7.2]{BDM:thick} in place of~\cite[Theorem 3.9]{DahmaniLi}.) 
\end{proof}

Combining Corollary~\ref{cor:minimal_relhyp} with~\cite[Theorem 4.8]{BDM:thick} shows that if $G'$ is a group quasi-isometric 
to a free-by-$\integers$ group, then $G'$ is hyperbolic relative to thick subgroups.

We now turn to our main theorem.  Fix a finite-rank free group $F$.  Given $\phi\in\Out(F)$, by a 
\emph{lift} $\Phi$ of $\phi$ we mean an automorphism $\Phi:F\to F$ whose outer class is $\phi$.  Fix a free basis $\mathcal 
S$ for $F$.  Recall that the \emph{growth function} $\growth_{\Phi,\mathcal S}:\naturals\to\naturals$ is defined by 
$\growth_{\Phi,\mathcal S}(n)=\max_{s\in\mathcal S}\|\Phi^n(s)\|$, where $\|g\|$ denotes word length.  Recall also that the 
asymptotic behaviour of 
$\growth_{\Phi,\mathcal S}$ is independent of the choice of generating set, and that the growth function is either 
exponential or polynomial of degree $\eta\leq |\mathcal S|$.  In the latter case, we say $\Phi$ (and its outer class $\phi$) 
are \emph{polynomially growing} and refer to $\eta$ as the \emph{polynomial growth rate}.

We now recall the notion of a \emph{thick} group, which was introduced in~\cite{BDM:thick} as both an obstruction to 
the existence of a nontrivial relatively hyperbolic structure and a ``structural'' version of the property of having a 
polynomial divergence function.  The definition of thickness is inductive, and, if $G$ is a thick group, there is an 
associated invariant $n\geq0$, the \emph{order of thickness}.  The reader is referred to~\cite{BDM:thick} 
and~\cite{BD:thick} for a more detailed discussion of the several closely-related notions of thickness.  Here, we just 
restate the facts about thickness needed for most of our discussion; see~\cite[Section 4]{BD:thick}.

\begin{itemize}
 \item A finitely generated group $G$ is \emph{strongly thick of order $0$} if no asymptotic cone of $G$ has a cut-point.  
For example, if $G\cong A\times B$, where $A,B$ are infinite groups, then $G$ is strongly thick of order $0$.
 \item Let $G$ split as a finite graph of groups where the edge groups are infinite and the vertex groups are thick 
of order $n$.  Suppose, moreover, that the vertex groups are \emph{quasi-convex}, in the sense that there exist constants 
$C,L$ so that for each vertex group $A$, any two points in $A$ can be connected by an $(L,L)$--quasigeodesic in 
$\neb_C(A)$.  Then $G$ is strongly thick of order $\leq n+1$.  (This is Proposition~4.4 in~\cite{BD:thick}.)
\end{itemize}

We will need the full definition of strong thickness in the case $n=1$, in the proof of Lemma~\ref{lem:linear_case}, 
so we give the definition in that proof.  Our main theorem is:

\begin{thm}\label{thm:main}
Let $F$ be a free group of finite rank at least $1$.  Let $\phi\in\Out(F)$ be polynomially-growing, with polynomial 
growth rate $\eta\ge0$, and let $\Phi\in\Aut(F)$ be a lift of $\phi$.  Then $F\rtimes_\Phi\integers$ is strongly 
thick of order $\eta$.
\end{thm}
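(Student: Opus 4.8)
The plan is to prove, by induction on $\eta$, that $G:=F\rtimes_\Phi\integers$ is strongly thick of order at most $\eta$, and then to deduce from known divergence estimates that the order is exactly $\eta$; the inductive step will use \cite[Proposition~4.4]{BD:thick} (the second bullet point above). Two preliminary reductions: since $F\rtimes_{\Phi^d}\integers$ has index $d$ in $G$, since strong thickness of a fixed order is a quasi-isometry invariant \cite{BDM:thick}, and since $\Phi^d$ has the same polynomial growth rate as $\Phi$, I may replace $\Phi$ by a power and assume that $\Phi$ is a \emph{unipotent} polynomially-growing automorphism \cite{BFH:Kolchin}, so that $\Phi$ admits an improved relative train track representative with a well-behaved filtration. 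For the base cases: when $\eta=0$ the outer class $\phi$ has finite order, so $\Phi^d$ is inner for some $d$ and $G$ is commensurable with $F\times\integers$, which is strongly thick of order $0$; and when $\eta=1$, $G$ is strongly thick of order $1$ by Lemma~\ref{lem:linear_case}.

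Assume now that $\eta\ge2$ and that the claim holds for all smaller growth rates. Fix an improved relative train track map $f\colon\Gamma\to\Gamma$ representing $\Phi$, with filtration $\emptyset=\Gamma_0\subset\Gamma_1\subset\cdots\subset\Gamma_N=\Gamma$ by $\Phi$-invariant subgraphs, so that every edge $E$ of the $r$-th stratum satisfies $f(E)=E\cdot u$ for some path $u$ in $\Gamma_{r-1}$. Iterating, $f^n(E)=E\cdot u\cdot f(u)\cdots f^{n-1}(u)$; hence the polynomial growth rate $d_r$ of $f|_{\Gamma_r}$ satisfies $d_r\le d_{r-1}+1$, with $d_0=0$ and $d_N=\eta$. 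Let $\Gamma'=\Gamma_r$ be the largest filtration term with $d_r\le\eta-1$; since growth rates rise by at most one, $d_r=\eta-1$, and $\pi_1(\Gamma')$ is a $\Phi$-invariant free factor of $F$ on which $\Phi$ restricts to a polynomially-growing automorphism of growth rate exactly $\eta-1$. By the inductive hypothesis, $M':=\pi_1(\Gamma')\rtimes_\Phi\integers$ is strongly thick of order at most $\eta-1$.

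The crux is the structural claim that every edge $E$ of $\Gamma\setminus\Gamma'$ lies in a stratum of growth rate exactly $\eta$ and that the suffix $u_E$ in $f(E)=E\cdot u_E$ crosses no edge of $\Gamma\setminus\Gamma'$: otherwise $f^n(E)$ would grow like $n^{\eta+1}$, contradicting $d_N=\eta$. Hence $u_E\in\pi_1(\Gamma')$, so the relation $tEt^{-1}=\Phi(E)=E\,u_E$ in $G$ rewrites as $E^{-1}tE=u_E t$ with $u_E t\in M'$. Arranging $\Gamma'$ to be spanning, the edges $E_1,\dots,E_k$ of $\Gamma\setminus\Gamma'$ then yield the presentation $G=\langle\,M',E_1,\dots,E_k\mid E_i^{-1}tE_i=u_{E_i}t\ \ (1\le i\le k)\,\rangle$, which exhibits $G$ as a multiple HNN extension of $M'$ with stable letters $E_1,\dots,E_k$ and with infinite cyclic associated subgroups $\langle t\rangle$ and $\langle u_{E_i}t\rangle$ — equivalently, as the fundamental group of a finite graph of groups with single vertex group $M'$, with $k$ loop-edges, and with all edge groups infinite. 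To conclude via \cite[Proposition~4.4]{BD:thick} that $G$ is strongly thick of order at most $\eta$, it then suffices to check that $M'$ is quasi-convex in $G$ in the metric sense of that proposition, which finishes the induction.

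I expect this quasi-convexity to be the main obstacle: it amounts to a linear bound on the distortion of $M'$ in $G$, which should in turn follow from the fact that the HNN extension above is non-ascending — each associated subgroup $\langle t\rangle$, $\langle u_{E_i}t\rangle$ is undistorted in $M'$ and almost malnormal there, away from the $\integers^2$'s centralising $t$ — so that a Britton's-lemma reduction of a word representing an element of $M'$ does not inflate its length; the point to exploit is that resolving a pinch $E_i^{-1}t^nE_i\mapsto u_{E_i}^{\,n}t^n$ cannot create a new pinch at that site, as $u_{E_i}^{\,n}t^n\notin\langle t\rangle$. This is exactly the sort of control on the geometry of mapping tori of polynomially-growing automorphisms established by Macura \cite{Macura}, which one may invoke instead. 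Finally, for the lower bound: Macura \cite{Macura} shows the divergence of $G$ is polynomial of degree $\eta+1$, while a group strongly thick of order $m$ has divergence at most polynomial of degree $m+1$ \cite{BD:thick}; thus $G$ is not strongly thick of order less than $\eta$, and so is strongly thick of order exactly $\eta$. The remaining genuine difficulty, besides quasi-convexity, is extracting the structural claim from relative train track theory — which is where unipotence, and the completely-split refinement of relative train tracks, are used.
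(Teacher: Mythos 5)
Your overall strategy coincides with the paper's: pass to a power of $\Phi$ admitting an improved relative train track (Kolchin) representative, induct on the polynomial growth rate by splitting $G$ along the fastest-growing strata into a graph of groups with infinite cyclic edge groups, conclude via \cite[Proposition 4.4]{BD:thick}, treat $\eta\le1$ as base cases (with Lemma~\ref{lem:linear_case} doing the real work at $\eta=1$), and obtain the lower bound $\tau\ge\eta$ from Macura's divergence estimate together with \cite[Corollary 4.17]{BD:thick}. There are, however, two genuine gaps. The first is your structural claim, which is false as stated. Taking $\Gamma'=\Gamma_r$ to be the largest filtration term with $d_r\le\eta-1$ does \emph{not} force every edge of $\Gamma\setminus\Gamma'$ to have growth rate $\eta$ with suffix in $\Gamma'$: the filtration may introduce, above $\Gamma_r$, further edges of small growth rate, and a later top-rate edge may have its suffix cross one of them. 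Concretely, on a rose with five petals let $f(e_1)=e_1$, $f(e_2)=e_2e_1$, $f(e_3)=e_3e_2$, $f(e_4)=e_4e_1$, $f(e_5)=e_5e_4$; then $\eta=2$, the largest admissible $r$ is $2$, the edge $e_4\in\Gamma\setminus\Gamma'$ grows only linearly, and the relation $e_5^{-1}te_5=e_4t$ has associated subgroup $\langle e_4t\rangle\not\le M'=\pi_1\Gamma_2\rtimes\integers$, so your multiple HNN presentation over $M'$ breaks down. The repair is exactly the input the paper takes from \cite[Lemma 2.16]{Macura} (valid for $\eta\ge2$ after passing to a Kolchin map, using that iterated images of suffixes do not cancel): an edge of rate $d$ maps only over edges of rate $<d$, so one may reorder the filtration to put all edges of rate $\le\eta-1$ first, or equivalently delete exactly the set $\mathcal E$ of edges of rate $\eta$. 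Even then the complement is in general disconnected and may have simply connected components (the paper's ``doomed'' edges), whose mapping tori are merely conjugates of edge groups rather than thick vertex groups; so ``single vertex group $M'$'' and ``$\pi_1(\Gamma')$ is a $\Phi$-invariant free factor'' must be replaced by a component-by-component statement. None of this is fatal, but it is the technical heart of the upper bound and cannot be waved through.

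The second gap is quasi-convexity of the vertex groups, which you rightly identify as essential but do not prove. The route you sketch --- Britton's lemma plus almost malnormality of the associated cyclic subgroups --- is not the right mechanism: $\langle t\rangle$ is far from almost malnormal in $M'$ (its centraliser contains $\pi_1\Gamma_{i_0}^\alpha\times\langle t\rangle$ for a non-simply-connected invariant subgraph $\Gamma_{i_0}^\alpha$, which always exists here), and a non-ascending HNN extension with undistorted associated subgroups can still have distorted vertex groups when the edge groups are distorted in the ambient group, so some genuine geometric input is required. The paper's argument is short and self-contained: the homomorphism $G\to\langle t\rangle$ killing $F$ is a coarsely Lipschitz retraction, so every edge space (a conjugate of $\langle t\rangle$) is a coarse Lipschitz retract of all of $G$; given $x,y$ in a vertex space $V$ and a geodesic $\gamma$ between them, each excursion of $\gamma$ out of $V$ exits and re-enters through a single edge space, and replacing each excursion by its retraction to that edge space yields a path from $x$ to $y$ in a uniform neighbourhood of $V$ whose length is linear in $|\gamma|$. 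With these two repairs your induction closes and is essentially the paper's proof.
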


If a group $G$ is thick of order $n$, then the divergence function of $G$ (see~\cite{DMS,BD:thick,Gersten1,Gersten2}) 
is bounded above by a polynomial of degree $n+1$, although lower bounds are more difficult to 
establish in general (see e.g.~\cite{DaniThomas,BHS:cox, Levcovitz1,Macura}).  In~\cite{Macura}, Macura gave upper and 
lower bounds on the divergence function of $F\rtimes_\Phi\integers$, both polynomial of degree 
$\eta+1$.  Macura's result uses the decomposition of $F\rtimes_\Phi\integers$ as graph of groups with $\integers$ edge 
groups coming from a relative train track representative for $\Phi$, and implies that $\eta$ distinguishes quasi-isometry 
types of mapping tori of polynomial-growth automorphisms.

To an extent, thickness is implicit in Macura's argument, but her work predates the formal definition by several years.  Our 
proof of Theorem~\ref{thm:main} follows Macura's strategy, relying on the same splitting coming from a relative train track 
representative.

\subsection*{Acknowledgments}
I am very grateful to Noel Brady for detailed discussions, and for explaining Macura's work on this subject, during a 
2014 
visit to the University of Oklahoma.  I am also grateful to Jason Behrstock, Pritam Ghosh, Ivan Levcovitz, Daniel 
Woodhouse, and the referee for several helpful comments and answers to questions; I thank the first three for encouraging me 
to write 
Theorem~\ref{thm:main} down.  This work was partly supported by EPSRC grant EP/R042187/1.

\section{Proof of Theorem~\ref{thm:main}}\label{sec:proof}
Throughout, we adopt the notation from Theorem~\ref{thm:main}.

\begin{proof}
Adopt the notation of the statement; in particular, $\Phi$ has polynomial 
growth rate of order $\eta$.  Since the order of strong thickness is a 
quasi-isometry invariant (see~\cite[Remark 7.2]{BDM:thick} and~\cite[Definition 
4.13]{BD:thick}), and the polynomial growth rate of any positive power of $\Phi$ coincides with that of $\Phi$, it suffices 
to prove the theorem for $G=F\rtimes_{\Phi^k}\integers$, for any $k>0$.  By~\cite[Theorem 5.1.5]{BFH:dynamics}, we can 
choose $k>0$ with the property that $\Phi^k$ admits an \emph{improved relative train track representative}.  In particular, 
there exists a finite connected graph $\Gamma$, with $\pi_1\Gamma$ identified with $F$, 
and a cellular map $f:\Gamma\to\Gamma$, inducing the map $\Phi^k$ on 
$\pi_1\Gamma$, so that the following hold:
\begin{enumerate}[(A)]
 \item \label{item:filtration}There is a filtration $\emptyset=\Gamma_0\subset\Gamma_1\subset\cdots\subset\Gamma_n=\Gamma$, 
with each 
$\Gamma_i$ an $f$--invariant subgraph.  Each vertex is fixed by $f$.
 \item \label{item:1_edge}For $1\le i\le n$, the graph $\Gamma_i$ is obtained from $\Gamma_{i-1}$ by adding an (oriented) 
edge $e_i$.
 \item \label{item:path}For each $i\ge1$, we have $f(e_i)=e_ip_i$, where $p_i$ is a closed 
edge-path whose edges belong to $\Gamma_{i-1}$.  
\end{enumerate}
We can take each $p_i$ to be immersed.  The latter two properties on the above 
list rely on the fact that our automorphism has polynomial growth rate.

An edge $e_i$ is \emph{invariant} if $f(e_i)=e_i$, i.e. if  $p_{i}$ is 
trivial.  

The graphs $\Gamma_i$ need not be connected when $i<n$.  More precisely, if 
$e_i$ is non-invariant, then $e_i$ necessarily shares a 
vertex with $\Gamma_{i-1}$.  However, if $e_i$ is invariant, then $e_i$ can be disjoint from 
$\Gamma_{i-1}$.  Hence, let $\Lambda_i$ index the set of components $\Gamma_i^\alpha,\alpha\in\Lambda_i$, of $\Gamma_i$. 

Note that since $\Gamma_n=\Gamma$ is connected, $\Gamma_{n-1}$ has at most two components.

\textbf{The base case:} Observe that the edge $e_1$ is necessarily invariant, since $\Gamma_0=\emptyset$.  Suppose 
$\Gamma_1$ has a single vertex. Then $\pi_1\Gamma_1\cong\integers$, i.e. the mapping torus of $f|_{\Gamma_1}$ is a torus.  
On the other hand, if $\Gamma_1$ has two vertices, then $e_2$ is invariant, because $p_1\to \Gamma_1$ is immersed and 
therefore trivial.  Continuing in this way, we eventually find that there exists $i_0\ge1$ so that, for some 
$\alpha\in\Lambda_{i_0}$, the component $\Gamma_{i_0}^\alpha$ is non-simply-connected and each edge of 
$\Gamma_{i_0}^\alpha$ is invariant. 

Writing $G=\langle \pi_1\Gamma, t\mid \{tft^{-1}=\Phi^k(f):f\in\pi_1\Gamma\}\rangle$, we thus have a subgroup 
$G_0\cong\pi_1\Gamma_{i_0}^\alpha\times\langle t\rangle$ in $G$.  Since  $\pi_1\Gamma_{i_0}^\alpha\ne\{1\}$, the group $G_0$ 
decomposes as the direct product of two infinite groups, so $G_0$ is strongly thick of order $0$.  Hence, if $i_0=n$, then 
on one hand, $\Phi$ has polynomial growth of order $0$, and on the other hand, $G_0=\pi_1\Gamma_{i_0}\times\langle t\rangle$ 
is thick of order $0$, as required.

\textbf{The iterated splitting:}  Suppose that $n>i_0$.  By construction, $\Gamma_n=\Gamma_{n-1}\cup e_n$.  Since 
$\Gamma_{n-1}$ is $f$--invariant and $f(e_n)=e_np_n$, we have an associated splitting of $G_n$ as a graph of groups with the 
following properties:
\begin{itemize}
\item The underlying graph is a single edge.
\item The vertex groups have the form $\pi_1\Gamma_{n-1}^\alpha\rtimes_{\Phi^k}\integers$.  Note that $|\Lambda_{n-1}|$ is 
$1$ or $2$ according to whether the edge $e$ separates $\Gamma_n$.  (Recall that $\Gamma_n=\Gamma$ is connected, and the 
open edge $e$ has $1$ or $2$ complementary components.)

Moreover, at most one component of $\Gamma_{n-1}$ is simply connected.  (If not, $\pi_1\Gamma_n\cong\{1\}$, 
contradicting that $\rank(F)\ge1$.)
\item The edge-groups are conjugate to $\langle t\rangle$.  
\end{itemize}

Viewed as a graph of spaces, the mapping torus $M_n$ of $f$ has vertex spaces which are mapping tori of the restriction of 
$f$ to components of $\Gamma_{n-1}$.  We are attaching a cylinder as follows.  First, in $M_{n-1}$, every edge not belonging 
to $\Gamma_{n-1}$ is a \emph{horizontal} edge that joins some $v\in\Gamma_{n-1}$ to itself and, viewed as a loop in $M_n$, 
represents a conjugate of $t$.  Our cylinder is attached on one side along a horizontal edge.  On the other side, it is 
attached along a path of the form $p_nt_n$, where $p_n$ is as above, and $t_n$ is a horizontal edge.

(This splitting is discussed in detail in~\cite[Section 3]{Macura}, where it is called the \emph{topmost edge 
decomposition}.  The only difference is that, for the moment, we are just removing the single edge $e_n$, rather than many 
edges, as Macura does in producing the topmost edge decomposition.) 

For each component $\Gamma_{n-1}^\alpha$ in $\Lambda_{n-1}$, there is an induced filtration of $\Gamma_{n-1}^\alpha$ so that 
$\Gamma_{n-1}^\alpha$ and the restriction of $f$ to $\Gamma_{n-1}^\alpha$ satisfy 
properties~\eqref{item:filtration},\eqref{item:1_edge},\eqref{item:path} above.  By induction on the number of edges, either 
the vertex group   $\pi_1\Gamma_{n-1}^\alpha\rtimes_{\Phi^k}\integers$ is thick of order at most $n-1-i_0$, or, if 
$\Gamma_{n-1}^\alpha$ is simply connected, then $\pi_1\Gamma_{n-1}^\alpha\rtimes_{\Phi^k}\integers$ is isomorphic to an 
incident edge group.  There is at least one $\Gamma_{n-1}^\alpha$ so that the former holds.

We now check that the vertex group is quasi-convex in the sense of~\cite{BD:thick}, described above.  To this end, note that 
the map $F\rtimes_\Phi\langle t\rangle\to\langle t\rangle$ is a coarsely Lipschitz retraction to $\langle t\rangle$, so 
that, regarding a Cayley graph of $G$ as a tree of spaces associated to the above splitting, we have that each edge-space is 
a coarsely Lipschitz retract.  Fix a  vertex space $V$, and fix $x,y\in V$.  Let $\gamma$ be a geodesic of $G$ joining 
$x,y$.  Then either $\gamma$ lies in $V$, or we can write $\gamma=\alpha_0\beta_1\alpha_1\cdots\beta_k\alpha_k$, where each 
$\alpha_i$ lies in $V$ and each $\beta_i$ starts and ends in some edge space $E_i$ incident to $V$.  Replacing each $\beta_i$ 
by its projection to $E_i$ gives a path in $V$ that joins $x$ to $y$ and has length bounded by a linear function of 
$\dist_G(x,y)$.  Hence $V\hookrightarrow G$ is a quasi-isometric embedding, so 
any geodesic in $V$ (which is a connected graph) from $x$ to $y$ maps to a 
(uniform-quality) quasigeodesic of $G$ that lies in $V$.

Hence, by the inductive hypothesis and~\cite[Proposition 4.4]{BD:thick}, $G_n$ is strongly thick of order $\tau_n\leq 
n-i_0$.  This 
completes the proof that $G_n$ is thick and thus proves 
Corollary~\ref{cor:minimal_relhyp}; we now bound the order of thickness 
independently of the relative train track 
representative.

\textbf{Upper bound on order of thickness:}  We now analyse related splittings of $G$ to 
bound the order of thickness $\tau_n$ in terms of the polynomial growth rate $\eta_n$. For $n=i_0$, we 
saw that $\tau_n=\eta_n=0$, and we are done.  

Suppose $n>i_0$.  Recall that each edge $e_b$ has an associated \emph{polynomial growth rate}~\cite[Definition 
2.11]{Macura}.  Let $d_i$ be the polynomial growth rate of the edge $e_i$.  At this point, we will also apply 
Proposition~2.7 from~\cite{Macura} in order to assume that $f$ is a \emph{Kolchin map}.  The exact definition is not 
important, but this assumption will enable us to use facts from~\cite{Macura} about growth rates of edges.  

There are two cases.  First, suppose that $\eta_n\ge2$.  Then by Lemma~2.16 of~\cite{Macura}, there is a nonempty set 
$\mathcal E$ containing exactly the edges $e_i$ with $d_i=\eta_n$. By the same 
lemma, each $e_i\in\mathcal E$ maps over some edge 
$e_{j(i)}$ with $d_{j(i)}=\eta_n-1$, and conversely any edge mapping over some edge of growth rate $\eta_n-1$ belongs to 
$\mathcal E$.  An edge $e_i$ is \emph{doomed} if either $e_i\in\mathcal E$, or the following holds: $d_i<\eta_n$, and the 
largest connected subgraph of $\Gamma$ that contains $e_i$ and consists of edges not in $\mathcal E$ is simply-connected.  A 
vertex is doomed if each of its incident edges is doomed.

Observe that if $e$ is a doomed edge not in $\mathcal E$, then $e$ is 
invariant.  Indeed, the image of $e$ has the form $ep$, where $p$ is an 
immersed path consisting of edges in the subgraph of $\Gamma$ described above, 
which is simply-connected since $e$ is doomed.  Thus $p$ is trivial, i.e. $e$ 
is invariant.

Let $\Gamma'$ be obtained from $\Gamma$ by removing each doomed vertex and removing the interior of each doomed edge.  Then 
each component of $\Gamma'$ is $f$--invariant (note that its constituent edges need not be invariant).  Indeed, let $e_i$ be 
an edge of $\Gamma'$.  Then $d_i\leq \eta_n-1$, so by Lemma~2.16 of~\cite{Macura}, the path $p_i$ cannot traverse any edge 
in $\mathcal E$.  Now, suppose that $p_i$ has a subpath $q$ lying in a 
connected subgraph $C$ that is maximal with 
the property that none of its edges is in $\mathcal E$.  Write $p_i=aqb$.  Then since $a,b$ cannot contain edges in 
$\mathcal E$, maximality of $C$ implies that $a,b$ lie in $C$, so $p_i$ is an immersed closed path in $C$.  Hence $C$ is not 
simply-connected, so its edges are not doomed.  Thus $p_i$, and hence $f(e_i)=e_ip_i$, lies in $\Gamma'$.
%

Thus, removing the edges of $\mathcal E$ induces a splitting of $G$ as a graph of groups whose edge groups are infinite 
cyclic and whose vertex groups are (necessarily quasi-convex) subgroups which are either: (a) mapping tori of 
the restriction of $\Phi^k$ to subgroups on which $\Phi^k$ has growth rate at most $\eta_n-1$; or (b) conjugate to edge 
groups.  Hence, by induction and~\cite[Proposition~4.4]{BD:thick}, we have that $G$ is thick of order at most 
$\eta_n-1+\tau_{1}$: here, $\tau_{1}$ is the maximal order of thickness of $\pi_1\Lambda\rtimes_{\Phi^k}\integers$, where 
$\Lambda$ is a connected, non-simply connected $f$--invariant subgraph of $\Gamma$ consisting entirely of edges whose 
polynomial growth rates are at most $1$.

Hence it remains to consider the case where $\eta_n=1$.  Here the situation is somewhat more complicated because~\cite[Lemma 
2.16]{Macura} does not apply: linearly-growing edges can map over other linearly-growing edges.  This case is handled in 
Lemma~\ref{lem:linear_case}, which shows that $\eta_n=1$ in the linearly growing case.  So, $\tau_1=1$ and $G$ is thick of 
order at most $\eta$.

\textbf{Lower bound on order of thickness:}  By~\cite[Theorem 8.1]{Macura}, the divergence function of $G$ is 
polynomial of degree at least $\eta+1$.  On the other hand, if $G$ is strongly thick of order $\tau$, then 
by~\cite[Corollary 
4.17]{BD:thick}, $G$ has divergence function that is polynomial of degree at most $\tau+1$.  This gives a 
contradiction unless $\tau\geq\eta$.  We thus conclude that $G$ is strongly thick of order $\eta$.

More precisely, we have the following: given $x\in G$ and $r\geq 0$, and $y,z\in G$ with $\dist_G(x,y),\dist_G(x,z)\leq r$, 
let $\mu_x(y,z)$ be the infimum of $|P|$, where $P$ varies over all paths in $G$ from $y$ to $z$ that avoid the ball of 
radius $r/2$ about $x$.  Let $\chi_x(r)$ be the supremum of $\mu_x(y,z)$ over all such $y,z$, and let $\chi(r)$ be the 
supremum of $\chi_x(r)$ over all $x\in G$.  Theorem~8.1 of~\cite{Macura} shows that $\chi(r)$ is bounded below by a 
polynomial  of degree $\eta+1$.

On the other hand, applying Theorem~4.9 of~\cite{BD:thick} inductively, exactly as in the proof of~\cite[Corollary 
4.17]{BD:thick}, shows that $\chi(r)$ is bounded above by a polynomial of degree $\tau+1$.  The only difference between our 
situation and that in~\cite{BD:thick} is the base case.  Specifically, in order to apply~\cite[Theorem 8.2]{Macura}, we 
defined $\chi(r)$ using paths that avoid balls of radius $r/2$.  When applying~\cite[Theorem 4.9]{BD:thick}, we are taking 
advantage of the fact that the constant $\delta$ in that statement can be any element of $(0,1)$; we are using the case 
$\delta=\frac12$.  

In the base case, we cannot rely on~\cite[Proposition 4.12]{BD:thick}, as is done in the proof of~\cite[Corollary 
4.17]{BD:thick}, because that statement requires $\delta\in(0,\frac{1}{54})$.  Instead, we use a simple special case 
of~\cite[Proposition 4.12]{BD:thick}.  Specifically, we need to show that there is a fixed linear function $\rho$ so that 
for each component $\Lambda$ of $\Gamma_{i_0}$, the mapping torus $M_\Lambda$ of the restriction of $f$ to $\Gamma_{i_0}$ 
satisfies $\chi(r)\leq\rho(r)$, where $\chi$ is now defined in $\pi_1M_\Lambda$.  But this is clear since 
$M_\Lambda=\Lambda\times\mathbb S^1$, and we can take $\rho(r)=4r$.
\end{proof}

It remains to prove thickness of order $1$ in the linear growth case:

\begin{lem}\label{lem:linear_case}
Suppose that the automorphism $\phi$ has linear growth.  Then $G$ is strongly thick of order at most $1$.
\end{lem}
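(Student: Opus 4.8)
The plan is to run, at the level of a single linearly-growing stratum, a version of the peeling-off argument of Theorem~\ref{thm:main}, but to package the pieces so as to reach order $1$ directly rather than accumulating order one edge at a time. Reasoning as in the proof of Theorem~\ref{thm:main} (the order of strong thickness is a quasi-isometry invariant and is unchanged on replacing $\Phi$ by a power), I may assume $\Phi$ has an improved relative train track representative $f:\Gamma\to\Gamma$ satisfying \eqref{item:filtration}--\eqref{item:path}. The structural point I would establish first is: since $\phi$ grows linearly, whenever $e_i$ is a non-invariant edge, so $f(e_i)=e_ip_i$ with $p_i$ a nontrivial immersed closed path, the path $p_i$ traverses only invariant edges. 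Indeed $f^n(e_i)=e_i\,p_i\,f(p_i)\cdots f^{n-1}(p_i)$ with no cancellation, so a single non-invariant edge inside $p_i$ would already force $\|f^n(e_i)\|$ to grow at least quadratically in $n$. Let $\Gamma^{(0)}\subseteq\Gamma$ be the subgraph formed by the invariant edges together with all vertices; by \eqref{item:filtration}, $f$ is the identity on $\Gamma^{(0)}$. Observe also that the terminal vertex of each non-invariant edge $e_i$ lies on the nontrivial loop $p_i$, so the component of $\Gamma^{(0)}$ containing it is not simply connected.

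Next I would use the graph-of-groups splitting of $G$ obtained by deleting the open non-invariant edges (the decomposition used in the proof of Theorem~\ref{thm:main}, with $\mathcal E$ taken to consist of \emph{all} non-invariant edges). Its edge groups are infinite cyclic (conjugates of $\langle t\rangle$), and a vertex group has the form $\pi_1\Gamma^{(0),\alpha}\rtimes_{\Phi^k}\langle t\rangle$; since $f$ restricts to the identity on $\Gamma^{(0),\alpha}$, this is a direct product $\pi_1\Gamma^{(0),\alpha}\times\integers$, and it is quasi-convex in $G$ by the retraction argument of the proof of Theorem~\ref{thm:main}. Call such a vertex group \emph{thick} if $\pi_1\Gamma^{(0),\alpha}\ne\{1\}$, in which case it is a product of two infinite groups and hence strongly thick of order~$0$, uniformly over the finitely many $\alpha$; call it \emph{thin} otherwise, in which case it is infinite cyclic, and each incident edge group embeds in it with finite-index (hence coarsely dense) image. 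By the structural point above, the terminal vertex of every non-invariant edge sits in a non-simply-connected component of $\Gamma^{(0)}$; thus every edge of the underlying graph has at least one thick endpoint, so no edge joins two thin vertex groups. (If there are no non-invariant edges at all then $\Gamma^{(0)}=\Gamma$ and $G\cong F\times\integers$ is already strongly thick of order $0$, so I may assume the splitting is nontrivial.)

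I would then check the definition of strong thickness of order at most $1$ from \cite[\S4]{BD:thick}: one needs a constant $\tau$ and a family $\mathcal L$ of subspaces of a Cayley graph of $G$ so that $G=\bigcup_{L\in\mathcal L}\neb_\tau(L)$, every $L\in\mathcal L$ is strongly thick of order $0$ with constants independent of $L$, and any two members of $\mathcal L$ are joined by a chain in $\mathcal L$ --- of length controlled by a fixed affine function of the distance between them --- each pair of consecutive terms of which have $\tau$-neighbourhoods meeting in a set of infinite diameter. Realise the Cayley graph as a tree of spaces over the Bass--Serre tree $T$ of the splitting, and let $\mathcal L$ be the collection of vertex spaces lying over the thick vertex groups. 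Coarse covering holds because each edge space lies in a bounded neighbourhood of an incident vertex space, while each thin vertex space is infinite cyclic and coarsely coincides with each of its incident edge spaces, which in turn lie in bounded neighbourhoods of adjacent thick vertex spaces (these exist because, by the previous paragraph, thin vertices of $T$ are never adjacent to one another). Uniform order-$0$ thickness of the members of $\mathcal L$ was arranged above (cosets of a fixed flat are isometric to it, and there are finitely many flats). For chaining, given two thick vertex spaces I would take the $T$-geodesic between them and delete its thin vertices, which are isolated among its vertices; any two consecutive surviving terms $W,W'$ are then either adjacent in $T$ or separated by a single thin vertex $U$, and once $\tau$ is chosen large enough $\neb_\tau(W)\cap\neb_\tau(W')$ contains the edge space between $W,W'$ in the first case and contains $U$ itself in the second, so in either case it has infinite diameter, while the chain length is linear in the distance because geodesics in $T$ have length linear in distance in $G$. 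This gives strong thickness of order at most $1$.

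The step I expect to be the real obstacle --- and the reason the full definition of strong thickness is needed rather than just \cite[Proposition~4.4]{BD:thick} --- is the treatment of the simply-connected components of $\Gamma^{(0)}$: these produce ``thin'' vertex groups isomorphic to $\integers$, which are not themselves strongly thick of order~$0$, so one cannot directly apply \cite[Proposition~4.4]{BD:thick}, and an edge-by-edge recursion would inflate the order. Everything hinges on the observation that the loop $p_i$ attached to each non-invariant edge forces the component of $\Gamma^{(0)}$ at its far endpoint to be non-simply-connected, so that the thin vertices of $T$ are mutually non-adjacent; this is precisely what allows the network of flats coming from the non-simply-connected components of $\Gamma^{(0)}$ to simultaneously coarsely cover $G$ and be chained through infinite-diameter intersections, holding the order of thickness at $1$.
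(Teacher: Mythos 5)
Your argument hinges on the structural claim that, for a linearly growing automorphism, each suffix $p_i$ in $f(e_i)=e_ip_i$ traverses only invariant edges. That claim is false, and the justification offered for it (``$f^n(e_i)=e_ip_if(p_i)\cdots f^{n-1}(p_i)$ with no cancellation'') is where the error enters: the concatenation must be tightened, and $f(p_i)$ can cancel down dramatically. The correct statement, coming from the improved relative train track / Kolchin theory, is only that $p_i$ is a (closed, immersed) \emph{Nielsen path}, i.e.\ $f(p_i)$ tightens to $p_i$; such a path may well contain non-invariant edges. The paper's own Remark~\ref{exmp:torus} is a counterexample to your claim: $f(e_0)=e_0$, $f(e_1)=e_1e_0$, $f(e_2)=e_2e_0$, $f(e_3)=e_3e_0e_1e_2^{-1}$ is linearly growing, yet $p_3=e_0e_1e_2^{-1}$ traverses the non-invariant edges $e_1,e_2$; one has $f^n(e_3)=e_3p_3^{\,n}$ after tightening, so growth stays linear. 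Nor can this be fixed by choosing a better representative: the twisting element $e_0e_1e_2^{-1}$ is simply not conjugate into the invariant subgraph's fundamental group.

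This failure propagates through the rest of your proof. The graph-of-groups decomposition you describe --- vertex groups $\pi_1\Gamma^{(0),\alpha}\times\integers$, all edge groups conjugates of $\langle t\rangle$ --- does not exist in general, because when $p_i$ leaves $\Gamma^{(0)}$ the attaching circle $p_it_i$ of the corresponding cylinder does not live in a vertex space; in the example the relevant edge group is $\langle e_0e_1e_2^{-1}t\rangle$, which is not conjugate into $\langle e_0,t\rangle$. Consequently your family $\mathcal L$ of lifted product vertex spaces neither coarsely covers $G$ nor can be chained: the edge spaces of the true (one-edge-at-a-time) decomposition are lines like $\langle p_jt_j\rangle$ that are not coarsely contained in any product $\pi_1\Gamma^{(0),\alpha}\times\integers$, so consecutive members of your chains need not have unbounded coarse intersection. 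The missing ingredient is precisely the family of $\integers^2$ subgroups $\langle t_j,p_j\rangle$ (flats/tori $T_j$) arising from the Nielsen paths; these absorb the problematic edge groups, are wide, and must be added to the network alongside the products coming from the non-simply-connected invariant components. That is the content of the paper's construction of $\mathcal W=\mathcal W_0\cup\mathcal W_1$ and of the inductive verification that the edge spaces are coarsely contained in network elements. Your overall frame (tight network of wide, quasiconvex subspaces, chained along the Bass--Serre tree) is the right one, but without the tori the argument does not close.
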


\begin{proof}
We will induct on the number of edges in the filtration of a graph $\Gamma$ associated to a relative train track 
representative of $\phi$.  The goal is to construct a \emph{tight network} $\mathcal W$ of \emph{uniformly wide} subspaces 
of $G$; as in~\cite{BD:thick}, this is sufficient to prove the claim.  (The definitions of \emph{tight network} and 
\emph{uniformly wide} are given below.)

\textbf{The relative train track representative:}  Exactly as in the proof of Theorem~\ref{thm:main}, we can assume that 
$\phi$ is represented by a Kolchin map 
$f:\Gamma\to\Gamma$ with $\Gamma$ equipped with a filtration $\Gamma_0\subset\cdots\subseteq\Gamma_n=\Gamma$ exactly as 
before.  

We are going to use the following properties of $f$:
\begin{enumerate}[(i)]
     \item \label{item:nielsen}For each $i$, we have $f(e_i)=e_ip_i$ where $p_i$ is either trivial (i.e. $e_i$ is an 
invariant edge) or $p_i$ is an immersed closed (possibly trivial) \emph{Nielsen path},  i.e. the tightening of $f(p_i)$ is 
$p_i$.    This occurs when $e_i$ is a linearly-growing edge by~\cite{BFH:dynamics}.

    \item \label{item:order}If $e_i$ is an 
invariant edge, and $e_j$ is an edge with $j<i$, then we can reverse the 
order of $e_i,e_j$ in the filtration, because $p_i$ cannot map over $e_j$, since $e_i$ is invariant.  Hence we can 
assume that $d_n=1$, where $e_n$ is the topmost edge in the filtration.
\end{enumerate}

\textbf{Sub-mapping tori:}  Let $M_n$ be the mapping torus of $f$.  For each $i\leq n$, and each component $\Gamma^\alpha_i$ 
of $\Gamma_i$,  we have 
that $\Gamma^\alpha_i$ is $f$--invariant, and we take $M_i^\alpha$ to be the mapping torus of the restriction of $f$ to 
$\Gamma^\alpha_i$.  Note that $M_i^\alpha\subset M_j^{\alpha'}$, for some $\alpha'$, whenever $i<j$.

\textbf{Collapsing cyclic sub-mapping tori to circles:} Let $i_0\leq n$ be maximal such that each component 
$\Gamma_{i_0}^\alpha$ of $\Gamma_{i_0}$ consists of $f$--invariant 
edges.  So, property~\eqref{item:order} guarantees that $d_i=1$ if and only if $i>i_0$.  We may assume that $n>i_0$, for 
otherwise $G$ is thick of order $0$, and we are done.

We now do some collapsing; this step isn't necessary, but makes later parts of the proof easier to picture.  We would like 
to assume that every component $\Gamma_{i_0}^\alpha$ consisting of invariant edges is a core graph, i.e. has no valence--$1$ 
vertex.  To this end, collapse each free face of $\Gamma_{i_0}^\alpha$, yielding a new graph $\bar\Gamma_n$ which is a 
deformation retract of $\Gamma_n$.  The map $f$ descends to a homotopy equivalence $\bar f:\bar\Gamma_n\to\bar\Gamma_n$ 
inducing the map $\phi$ on fundamental group, because we collapsed invariant edges.  There is an obvious filtration of $\bar 
\Gamma_n$ induced by the filtration of $\Gamma_n$, and it is easily verified that $\bar f$ satisfies 
properties~\eqref{item:nielsen},\eqref{item:order} above.  

So, we can assume that the components 
$\Gamma_{i_0}^\alpha$ of $\Gamma_{i_0}$ are either single points or 
non-simply connected core graphs; as in the proof of Theorem~\ref{thm:main}, there exists at 
least one component of the latter type.  

\textbf{The sub-network $\mathcal W_0$:}  Let $\rho:\widetilde M_n\to M_n$ be the universal cover.  Let $\mathcal W_0$ be 
the set of components of $\rho^{-1}(M_{i_0}^\alpha)$, where $\alpha\in\Lambda_{i_0}$ is such that the component 
$\Gamma_{i_0}^\alpha$ of $\Gamma_{i_0}$ is non-simply-connected.  So, each $W\in  \mathcal W_0$ is isometric to the product 
of $\reals$ with one of finitely many trees.

\textbf{The sub-network $\mathcal W_1$ of tori:}  For each $i_0<j\leq n$, consider the immersed closed Nielsen path 
$p_j\to\Gamma_{j-1}$.  Let $v_j$ be the initial vertex of $p_j$ and let $t_j$ be the (unique) edge of $M_{j-1}$ joining 
$v_j$ to itself to produce a loop representing a conjugate of $\langle t\rangle$ in $\pi_1M_n$.  Situating the basepoint of 
$M_n$ at $v_j$, we see that the elements of $\pi_1M_n$ represented by $p_j$ and $t_j$ commute, so we have a torus $T_j$ and 
a $\pi_1$--injective cellular map $T_j\to M_n$ so that the image of the induced map $\pi_1T_j\to\pi_1(M_n,v_j)$ is the 
$\integers^2$ subgroup generated by $t_j$ and $p_j$.   We can choose $T_j$ to lie in some component 
$M_{j-1}^\alpha$ of $M_{j-1}$, namely the component containing $v_j$, and that the paths $t_j\to M_n$ and $p_j\to M_n$ 
factor through $T_j\to M_n$.

\begin{rem}[A $T_j$ example]\label{exmp:torus}
Here is an example of a torus of the type just described.  Let $F=\langle e_0,e_1,e_2,e_3\mid\rangle$ and let $f$ be defined 
by: $f(e_0)=e_0,f(e_1)=e_1e_0,f(e_2)=e_2e_0,f(e_3)=e_3e_0e_1e_2^{-1}$.  So, $$G=\langle e_0,e_1,e_2,e_3,t\mid 
te_0t^{-1}=e_0,te_1t^{-1}=e_1e_0,te_2t^{-1}=e_2e_0,te_3t^{-1}=e_3e_0e_1e_2^{-1}\rangle.$$  The tori $T_1,T_2$ 
correspond to the subgroup $\langle t,e_0\rangle$.  The torus $T_3$ corresponds to the subgroup $\langle 
t,e_0e_1e_2^{-1}\rangle$.  Each of these is a $\integers^2$ subgroup.  For example, note that 
$te_0e_1e_2^{-1}t^{-1}=f(e_0e_1e_2^{-1})=e_0e_1e_0e_0^{-1}e_2^{-1}=e_0e_1e_2^{-1}.$

Consider the topmost edge splitting obtained by removing $e_3$.  This is an HNN extension with vertex group 
$\langle e_0,e_1,e_2,t\rangle$ and stable letter $e_3$; we have $e_3^{-1}te_3=e_0e_1e_2^{-1}t$.  Note 
that $\langle 
e_0e_1e_2^{-1}t\rangle$ is contained in the subgroup $\langle t,e_0e_1e_2^{-1}\rangle$ 
corresponding to $T_3$.  This phenomenon will be important later.  This concludes the example, and we resume the proof.
\end{rem}

\textbf{The candidate network:}  Let $\mathcal W_1$ be the set of components of $\rho^{-1}(T_j)$ for $i_0<j\leq n$.  Let 
$\mathcal 
W=\mathcal W_0\cup\mathcal W_1$ and note that the elements of $\mathcal W$ coarsely cover $\widetilde M$.

\textbf{Quasiconvexity of the elements of $\mathcal W$:}  Arguing as in the proof of Theorem~\ref{thm:main} shows that 
$\widetilde M_i^\alpha$ is (uniformly) quasiconvex in $\widetilde M^{\alpha'}_{i+1}$ whenever $M_i^\alpha \subset 
M_{i+1}^{\alpha'}$, so each element of $\mathcal W_0$ is $(r,r)$--quasiconvex in $\widetilde M_n$ for some fixed constant 
$r$.  (Here quasiconvexity is in the sense of~\cite{BD:thick}.)

Let $\widetilde T_j\in\mathcal W_1$.  Then $\widetilde T_j$ uniformly coarsely coincides with the orbit of an $\integers^2$ 
subgroup of $\pi_1M_{j-1}^\alpha$, which is a free-by-cyclic group.  By Corollary 6.9 of~\cite{Button:unipotent}, abelian 
subgroups of free-by-$\integers$ groups are undistorted, so any two points in $\widetilde T_j$ can be joined by a 
uniform-quality discrete quasigeodesic of $\pi_1M_{j-1}^\alpha$ (hence a quasigeodesic of $\pi_1M_n$) that lies in 
$\widetilde T_j$.  Since there are only finitely many orbits of subspaces in $\mathcal W$, we conclude that there exists 
$s$ such that each $W\in\mathcal W$ is $(s,s)$--quasiconvex in $\widetilde M_n$ (quasiconvexity in the sense 
of~\cite{BD:thick}).

%

\textbf{$\mathcal W$ is uniformly wide:}  $\mathcal W$ contains finitely many isometry types of spaces, each of which is 
quasi-isometric to $F\times\integers$ for some finitely generated free group $F$.  So, any ultralimit of rescaled spaces in 
$\mathcal W$ has no cut-point, i.e. $\mathcal W$ is \emph{uniformly wide} in the sense of~\cite[Definition 
4.11]{BD:thick}.

\textbf{Induction:}  Fix $i\geq i_0$ and $r\ge0$.  By induction on the number of edges in $\Gamma_i$, there exists 
$\ell(i-1,r)$ such that the following hold for each $M^\alpha_{i-1}$ which is not a circle, i.e. where 
$\Gamma^\alpha_{i-1}$ is not a point (the first property is the defining property of a tight network, and the second 
property will be needed to make the induction work):
\begin{enumerate}
     \item \label{item:network}  Let 
$W,W'\in\mathcal W$ be contained in $\widetilde M^\alpha_{i-1}$.  Suppose that, for some $x\in\widetilde M^\alpha_{i-1}$, 
each of $W,W'$ intersects $\neb_r(x)$.  Then there is a sequence $W=W_1,\ldots,W_{\ell(i-1,r)}=W'$ such that each 
$W_t\in\mathcal W$, each $W_t$ lies in $\widetilde M^\alpha_{i-1}$, and for all $t\leq\ell(i-1,r)-1$, the spaces 
$W_t,W_{t+1}$ have unbounded, coarsely connected coarse intersection.
     
    \item\label{item:next_torus} Let $\rho:\widetilde M^\alpha_{i-1}\to M^\alpha_{i-1}$ be the universal covering map.  
Let $M^\beta_i$ be the component of $M_i$ containing $M^\alpha_{i-1}$.  Suppose that $e_i\subset M_i^\beta$.  Consider the 
graph of spaces decomposition of $M_i^\beta$ induced by removing $e_i$ from $\Gamma_i^\beta$ (i.e. the topmost edge 
decomposition).  Then the edge space is a circle intersecting $\Gamma^i_\beta$ in the midpoint of $e_i$.  Let 
$E\subset\widetilde M^\alpha_{i-1}$ be a component of the $\rho$--preimage of this circle.  Then $E$ is uniformly coarsely 
contained in some $W\in\mathcal W$ that lies in $\widetilde M^\alpha_{i-1}$.
%
%
\end{enumerate}

In the base case, where $i=i_0$, the former statement holds since $\widetilde M^\alpha_{i_0}\in\mathcal W$.

Fix $M^\beta_i$, the mapping torus of the restriction to $f$ of some 
non-simply connected $\Gamma_i^\beta$.  We will verify that the same properties 
hold for $\widetilde M^\beta_i$.  First, as before, removing the 
edge $e_i$ from $\Gamma^\beta_i$ decomposes $\widetilde M^\beta_i$ as a tree $\mathcal T$ of spaces whose vertex spaces are 
various translates of various $\widetilde M^\alpha_{i-1}$ and whose edge spaces are two-ended.  

Consider a splitting of $M^\gamma_{i+1}$ that arises from the deletion of $e_{i+1}$ from $\Gamma_{i+1}$ and has $M^\beta_i$ 
as a vertex space. The incoming edge space is attached via a circle in $M^\beta_i$ homotopic into the $\rho$--image of some 
element of $\mathcal W$ contained in $\widetilde M^\beta_i$.

Indeed, let $C$ be such an attaching circle in $M^\beta_i$.  There are two cases.  First, we could have   
$C=p_{i+1}t_{i+1}$, where $t_{i+1}$ is 
an edge of $M^\beta_i$ that forms a loop representing a conjugate of $t$, and $p_{i+1}$ is the Nielsen path such that 
$f(e_{i+1})=e_{i+1}p_{i+1}$.  In this case, the torus 
$T_{i+1}$ contains $t_{i+1}$ and the image of $p_{i+1}$, by construction.

Otherwise, $C$ traverses a single horizontal edge $t'$ in $M^\beta_i$ joining some vertex $v\in M^\beta_i$ to itself.  If 
$v$ is contained in some nontrivial $\Gamma^\delta_{i_0}\subset M^\beta_i$, then $t'$ is contained in $M^\delta_{i_0}$, and 
we are done, because then $\widetilde M^\delta_{i_0}\in\mathcal W$.  Otherwise, every edge of $\Gamma^\beta_{i}$ containing 
$v$ has the form $e_j$ for some $j\leq i$ (here we have used that nontrivial components of $\Gamma_{i_0}$ are core graphs).  
Applying property~\eqref{item:next_torus} inductively, $\widetilde C$ lies at Hausdorff distance at most $1$ from a line that 
is coarsely contained in some element of $\mathcal W$ lying in $\widetilde M^\beta_i$.  This verifies 
property~\eqref{item:next_torus} for $M^\beta_i$.  (The constant implicit in ``coarsely contained'' has increased, but this 
happens at most $n$ times.)

Now we verify property~\eqref{item:network}.  Let $x\in\widetilde M_i^\beta$ lie at distance at most $r$ from 
$W,W'\in\mathcal W$.  Let $v,v'$ be vertices of $\mathcal T$ so that the corresponding vertex spaces $V,V'$ have unbounded 
intersection with $W,W'$ respectively.  Let $\gamma$ be a geodesic of $\mathcal T$ from $v$ to $v'$, so that $|\gamma|\leq 
2r$.  For each valence-$2$ vertex of $\gamma$ whose corresponding vertex space 
is the mapping torus of the restriction of $f$ to a non-simply-connected graph, 
let $A,B\in\mathcal W$ lie in the associated vertex space and 
respectively coarsely contain the incoming and outgoing edge spaces along $\gamma$.  By the inductive hypothesis, $A,B$ can 
be joined by a sequence of at most $\ell(i-1,Kr)$ elements of $\mathcal W$ that lie in the given vertex space, where $Kr$ is 
the distance in the vertex space from $A$ to $B$ (the constant $K$ depends only on $s$).  Consecutive elements of the 
sequence have unbounded, coarsely connected coarse intersection.

Similarly, if $v$ is the initial vertex of $\gamma$, then we can choose $B\in\mathcal W$ that lies in the associated vertex 
space, contains the initial edge space, and can be joined to $W$ by a sequence of at most $\ell(i-1,Kr)$ elements of 
$\mathcal W$, with the same intersection properties.  The same holds for the terminal vertex of $\gamma$, with $W'$ replacing 
$W$.  Hence property~\eqref{item:network} holds, with $\ell(i,r)=\ell(i-1,Kr)(2r+1)$.  

\textbf{Conclusion:}  We have shown that $\mathcal W$ is a uniformly wide \emph{tight network} in $\widetilde M_n$, so, 
according 
to~\cite[Definition 4.13]{BD:thick}, $\widetilde M_n$, and hence $G$, is 
strongly thick of order at most $1$.  (Indeed, to prove that $\mathcal W$ is a 
tight network, it sufficed to exhibit the constant $\ell(n,3s)$.)
\end{proof}

\bibliographystyle{alpha}
\bibliography{thick_free_by_z}
\end{document}